\providecommand \@dotsep{5} \def\listtodoname{List of Todos} \def\listoftodos{\@starttoc{tdo}\listtodoname} \makeatother 
\patchcmd{\@startsection}{\@afterindenttrue}{\@afterindentfalse}{}{}             
\patchcmd{\part}{\bfseries}{\bfseries\LARGE}{}{}
\patchcmd{\section}{\scshape}{\bfseries}{}{}\renewcommand{\@secnumfont}{\bfseries} 
\patchcmd{\@settitle}{\uppercasenonmath\@title}{\large}{}{}
\patchcmd{\@setauthors}{\MakeUppercase}{}{}{}
\theoremstyle{plain}
\newtheorem{thm}{Theorem}[section] 
\newaliascnt{lemma}{thm}\aliascntresetthe{lemma}
\newaliascnt{cor}{thm}\newtheorem{cor}[cor]{Corollary}\aliascntresetthe{cor}
\newaliascnt{prop}{thm}\newtheorem{prop}[prop]{Proposition}\aliascntresetthe{prop}
\newtheorem*{thm*}{Theorem}
\newtheorem*{lem*}{Lemma}
\newtheorem*{cor*}{Corollary}
\theoremstyle{definition}
\newaliascnt{df}{thm}\newtheorem{df}[df]{Definition}\aliascntresetthe{df}
\newaliascnt{rem}{thm}\newtheorem{rem}[rem]{Remark}\aliascntresetthe{rem}
\newaliascnt{ex}{thm}\newtheorem{ex}[ex]{Example}\aliascntresetthe{ex}
\newtheorem*{df*}{Definition}
\newtheorem*{ex*}{Example}
\newtheorem*{rem*}{Remark}
\theoremstyle{remark}
\DeclareRobustCommand{\gobblefour}[5]{}    
\DeclareFontFamily{OT1}{pzc}{}                                
\DeclareFontShape{OT1}{pzc}{m}{it}{<-> s * [1.10] pzcmi7t}{}
\DeclareMathAlphabet{\mathpzc}{OT1}{pzc}{m}{it}
\DeclareSymbolFont{sfoperators}{OT1}{bch}{m}{n} \DeclareSymbolFontAlphabet{\mathsf}{sfoperators} \makeatletter\def\operator@font{\mathgroup\symsfoperators}\makeatother 
\DeclareSymbolFont{cmletters}{OML}{cmm}{m}{it}              
\DeclareSymbolFont{cmsymbols}{OMS}{cmsy}{m}{n}
\DeclareSymbolFont{cmlargesymbols}{OMX}{cmex}{m}{n}
\DeclareMathSymbol{\myjmath}{\mathord}{cmletters}{"7C}     \let\jmath\myjmath 
\DeclareMathSymbol{\myamalg}{\mathbin}{cmsymbols}{"71}     
\DeclareMathSymbol{\mycoprod}{\mathop}{cmlargesymbols}{"60}
\DeclareMathSymbol{\myalpha}{\mathord}{cmletters}{"0B}     \let\alpha\myalpha 
\DeclareMathSymbol{\mybeta}{\mathord}{cmletters}{"0C}      \let\beta\mybeta
\DeclareMathSymbol{\mygamma}{\mathord}{cmletters}{"0D}     \let\gamma\mygamma
\DeclareMathSymbol{\mydelta}{\mathord}{cmletters}{"0E}     \let\delta\mydelta
\DeclareMathSymbol{\myepsilon}{\mathord}{cmletters}{"0F}   \let\epsilon\myepsilon
\DeclareMathSymbol{\myzeta}{\mathord}{cmletters}{"10}      \let\zeta\myzeta
\DeclareMathSymbol{\myeta}{\mathord}{cmletters}{"11}       \let\eta\myeta
\DeclareMathSymbol{\mytheta}{\mathord}{cmletters}{"12}     \let\theta\mytheta
\DeclareMathSymbol{\myiota}{\mathord}{cmletters}{"13}      \let\iota\myiota
\DeclareMathSymbol{\mykappa}{\mathord}{cmletters}{"14}     \let\kappa\mykappa
\DeclareMathSymbol{\mylambda}{\mathord}{cmletters}{"15}    \let\lambda\mylambda
\DeclareMathSymbol{\mymu}{\mathord}{cmletters}{"16}        \let\mu\mymu
\DeclareMathSymbol{\mynu}{\mathord}{cmletters}{"17}        \let\nu\mynu
\DeclareMathSymbol{\myxi}{\mathord}{cmletters}{"18}        \let\xi\myxi
\DeclareMathSymbol{\mypi}{\mathord}{cmletters}{"19}        \let\pi\mypi
\DeclareMathSymbol{\myrho}{\mathord}{cmletters}{"1A}       \let\rho\myrho
\DeclareMathSymbol{\mysigma}{\mathord}{cmletters}{"1B}     \let\sigma\mysigma
\DeclareMathSymbol{\mytau}{\mathord}{cmletters}{"1C}       \let\tau\mytau
\DeclareMathSymbol{\myupsilon}{\mathord}{cmletters}{"1D}   \let\upsilon\myupsilon
\DeclareMathSymbol{\myphi}{\mathord}{cmletters}{"1E}       \let\phi\myphi
\DeclareMathSymbol{\mychi}{\mathord}{cmletters}{"1F}       \let\chi\mychi
\DeclareMathSymbol{\mypsi}{\mathord}{cmletters}{"20}       \let\psi\mypsi
\DeclareMathSymbol{\myomega}{\mathord}{cmletters}{"21}     \let\omega\myomega
\DeclareMathSymbol{\myvarepsilon}{\mathord}{cmletters}{"22}\let\varepsilon\myvarepsilon
\DeclareMathSymbol{\myvartheta}{\mathord}{cmletters}{"23}  \let\vartheta\myvartheta
\DeclareMathSymbol{\myvarpi}{\mathord}{cmletters}{"24}     \let\varpi\myvarpi
\DeclareMathSymbol{\myvarrho}{\mathord}{cmletters}{"25}    \let\varrho\myvarrho
\DeclareMathSymbol{\myvarsigma}{\mathord}{cmletters}{"26}  \let\varsigma\myvarsigma
\DeclareMathSymbol{\myvarphi}{\mathord}{cmletters}{"27}    \let\varphi\myvarphi
\DeclareMathOperator{\Hom}{Hom}
\DeclareMathOperator{\Spec}{Spec}
\DeclareMathOperator{\Trop}{Trop}
\DeclareMathOperator{\supp}{supp\,}
\DeclareMathOperator{\Alg}{{Alg}}
\DeclareMathOperator{\Rings}{{Rings}}
\DeclareMathOperator{\Bands}{{Bands}}
\newcommand\C{{\mathbb C}}
\newcommand\D{{\mathbb D}}
\newcommand\F{{\mathbb F}}
\newcommand\G{{\mathbb G}}
\renewcommand\H{{\mathbb H}}
\newcommand\K{{\mathbb K}}
\newcommand\M{{\mathbb M}}
\newcommand\N{{\mathbb N}}
\newcommand\R{{\mathbb R}}
\newcommand\T{{\mathbb T}}
\newcommand\U{{\mathbb U}}
\newcommand\cS{{\mathcal S}}
\newcommand\cX{{\mathcal X}}
\newcommand\Funpm{{\F_1^\pm}}
\renewcommand\char{\textup{char}\; }
\renewcommand{\min}{\textup{min}}
\renewcommand\geq{\geqslant}
\renewcommand\leq{\leqslant}
\newcommand{\gen}[1]{\langle #1 \rangle}
\newcommand{\genn}[1]{\langle\!\!\langle #1 \rangle\!\!\rangle}
\newcommand{\Genn}[1]{\Big\langle\!\!\Big\langle #1 \Big\rangle\!\!\Big\rangle}
\newcommand{\past}[2]{#1\!\sslash\!#2}
\newcommand{\pastgen}[3]{#1( #2 ) \!\sslash\!\langle\!\!\langle #3 \rangle\!\!\rangle}
\newcommand{\bandgen}[3]{#1[ #2 ] \!\sslash\!\langle #3 \rangle}
\renewcommand{\setminus}{\backslash}
\renewcommand\emptyset\varnothing
\title{On a theorem of Lafforgue}
\author{Matthew Baker}
\address{\rm Matthew Baker, School of Mathematics, Georgia Institute of Technology, Atlanta, USA}
\email{mbaker@math.gatech.edu}
\author{Oliver Lorscheid}
\address{\rm Oliver Lorscheid, University of Groningen, the Netherlands, and IMPA, Rio de Janeiro, Brazil}
\email{oliver@impa.br}
\begin{document}

\begin{abstract}
We give a new proof, along with some generalizations, of a folklore theorem (attributed to Laurent Lafforgue) that a rigid matroid (i.e., a matroid with indecomposable basis polytope) has only finitely many projective equivalence classes of representations over any given field.
\end{abstract}

\thanks{The authors thank David Speyer, Alex Fink, and Rudi Pendavingh for helpful discussions. We also thank BIRS for their hospitality hosting the workshop 23w5149: "Algebraic Aspects of Matroid Theory", during which the arguments in this paper emerged. Thanks also to Justin Chen, Eric Katz, and Bernd Sturmfels for their feedback on an earlier version of the paper. The first author was supported by NSF grant DMS-2154224 and a Simons Fellowship in Mathematics. The second author was supported by Marie Sk{\l}odowska Curie Fellowship MSCA-IF-101022339.}

\maketitle



\section{Introduction}
\label{introduction}

A matroid $M$ is called {\em rigid} if its base polytope $P_M$ has no non-trivial regular matroid polytope subdivisions.
Such matroids are interesting for a number of reasons; for example, a theorem of Bollen--Draisma--Pendavingh \cite{BDP18} 
asserts that for each prime number $p$, a rigid matroid is algebraically representable in characteristic $p$ if and only if it is linearly representable in characteristic $p$.
A folklore theorem, attributed to L. Lafforgue, asserts that a rigid matroid has at most finitely many representations over any field, up to projective equivalence.
This is mentioned without proof in a few places throughout the literature, for example in Alex Fink's Ph.D. thesis \cite[p.~10]{Fink10},
where he writes:

\begin{quote}
Matroid subdivisions have made prominent appearances in algebraic geometry. [\ldots] 
Lafforgue’s work implies, for instance, that a matroid
whose polytope has no subdivisions is representable in at most finitely many ways, up to the
actions of the obvious groups.
\end{quote}

We have been unable to find a proof of this result in the papers of Lafforgue cited by Fink \cite{Lafforgue99,Lafforgue03}, though a proof sketch appears in 
\cite[Theorem 7.8]{Fink15}.
In this paper, we provide a rigorous and efficient proof of Lafforgue's theorem, along with some new generalizations.

What is arguably most interesting about our approach to Lafforgue's theorem is that we deduce it from a purely algebraic statement which has nothing to do with matroids.
The only input from matroid theory needed is the fact that the {\em rescaling class functor} $\cX_M$ from pastures to sets is representable (see \autoref{sec:reformulation} below for further details).
We believe this to be a nice illustration of the power, and elegance, of the algebraic theory developed by the authors in \cite{Baker-Lorscheid20} and \cite{Baker-Lorscheid21b}.

\section{Reformulation and generalizations of Lafforgue's theorem} \label{sec:reformulation}

It is well-known to experts that a matroid $M$ is rigid if and only if every valuated matroid $\M$ whose underlying matroid is $M$ is rescaling equivalent to the trivially valuated matroid. Since we could not find a reference for this result, we provide a proof in \autoref{Appendix:Matroid Polytope Subdivisions}.

Recall from \cite{Baker-Lorscheid20} (see also \autoref{Appendix:Bands and Pastures}) that there is a category of algebraic objects called {\em pastures}, which generalize not only fields but also partial fields and hyperfields. 
According to \cite{Baker-Bowler19},
there is a robust notion of (weak) matroids over a pasture\footnote{Technically speaking, \cite{Baker-Bowler19} deals with {\em tracts}, not pastures, but the difference between the two is immaterial when considering weak matroids. For the sake of brevity, we do not define tracts in this paper, nor do we consider idylls or ordered blueprints (both of which play a prominent role in \cite{Baker-Lorscheid21b}).} $P$ such that (to mention just a few examples):
\begin{itemize}
\item Matroids over the Krasner hyperfield $\K$ are the same thing as matroids in the usual sense.
\item Matroids over the tropical hyperfield $\T$ are the same thing as valuated matroids.
\item Matroids over a field $K$ are the same thing as $K$-representable matroids, together with a choice of a matrix representation (up to the equivalence relation where two matrices are equivalent if they have the same row space).
\end{itemize}

For every matroid $M$ there is a functor $\cX_M$ from pastures to sets taking a pasture $P$ to the set of rescaling equivalence classes of (weak) $P$-representations of $M$.
A matroid $M$ is rigid if and only if $\cX_M(\T)$ consists of a single point. 
For a field $K$, $\cX_M(K)$ coincides with the set of projective equivalence classes of representations of $M$ over $K$.
Thus Lafforgue's theorem is equivalent to the assertion that if $\cX_M(\T)$ is a singleton, then $\cX_M(K)$ is finite for every field $K$. 

Recall from \cite{Baker-Lorscheid20} that for every matroid $M$, the functor $\cX_M$ is representable by a pasture $F_M$ canonically associated to $M$, called the {\em foundation} of $M$. Concretely, this means that $\Hom(F_M,P) = \cX_M(P)$ for every pasture $P$, functorially in $P$.

From this point of view, Lafforgue's theorem is equivalent to the assertion that if $\Hom(F_M,\T) = \{ 0 \}$, then $\Hom(F_M,K)$ is finite for every field $K$. This is the statement of Lafforgue's theorem that we actually prove in this paper. 
The advantage of this formulation is that it turns out to be a special case of a result which can be formulated purely in the language of pastures, without any mention of matroids! In fact, the algebraic incarnation of this result holds more generally with pastures (which generalize fields) replaced by {\em bands} (which generalize rings). 

See \autoref{Appendix:Bands and Pastures} for an overview of bands, including a definition, some examples, and the key facts needed for the present paper.

\subsection{An algebraic generalization of Lafforgue's theorem}

In order to state the algebraic result about bands which implies Lafforgue's theorem, we mention (see \autoref{prop: associated K-algebra} below)
that given a band $B$ and a field $K$, there is a canonically associated $K$-algebra $\rho_K(B)$ with the universal property that $\Hom_{\rm Band}(B,S) = \Hom_{K-{\rm alg}}(\rho_K(B),S)$ for every $K$-algebra $S$.
Moreover, if $B$ is finitely generated (which is the case, for example, when $B=F_M$ for some matroid $M$), then so is $\rho_K(B)$.

If $B$ is finitely presented, the set $\Hom(B,\T)$ has the structure of a finite polyhedral complex $\Sigma_B$; cf.\ \autoref{rem:dimension}.
Moreover, if $K$ is a field, the set $\Hom_{\rm Band}(B,K)$ is equal to $\Hom_{K-{\rm alg}}(\rho_K(B),K)$, which is in turn equal to the set $X_{B,K}(K)$ of $K$-points of the finite type affine $K$-scheme $X_{B,K} := \Spec(\rho_K(B))$.
(When $B=F_M$ for a matroid $M$, we call $X_{B,K}$ the {\em reduced realization space} of $M$ over $K$.)

Our first generalization of Lafforgue's theorem is as follows:

\begin{thm}
\label{thm:GeneralizedLafforgue1}
For every finitely generated band $B$ and every field $K$, we have the inequality $\dim X_{B,K} \leq \dim \Sigma_B$.
In particular, if $\Hom(B,\T) = \{ 0 \}$, then $\dim \Sigma_B = 0$ and thus $X_{B,K}(K) = \Hom(B,K)$ is finite for every field $K$.
\end{thm}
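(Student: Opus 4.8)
The plan is to bridge the two sides of the inequality by tropicalization. The key point is that a surjective rank-one valuation $v\colon K\to\T$ is in particular a homomorphism of bands (cf.\ \autoref{Appendix:Bands and Pastures}), so post-composition with $v$ induces a map $\Hom(B,K)\to\Hom(B,\T)=\Sigma_B$, and I want to show that its image sweeps out a subset of $\Sigma_B$ of the same dimension as $X_{B,K}$, which forces $\dim\Sigma_B\geq\dim X_{B,K}$.

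First I would reduce to the case that $K$ is algebraically closed and carries a nontrivial rank-one valuation $v$ — for instance $K$ could be taken to be the Puiseux series field over an algebraic closure of the original field. This is harmless: by the universal property of \autoref{prop: associated K-algebra} (and the Yoneda lemma) one has $\rho_L(B)\cong\rho_K(B)\otimes_K L$ for any field extension $L/K$, and the Krull dimension of a finitely generated algebra over a field is unchanged under base field extension; hence $\dim X_{B,K}$ depends only on $B$, while $\Sigma_B=\Hom(B,\T)$ does not involve $K$ at all.

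Now set $d=\dim X_{B,K}=\dim\rho_K(B)$ and choose a minimal prime $\fp$ of $\rho_K(B)$ with $\dim\bigl(\rho_K(B)/\fp\bigr)=d$. The domain $A'=\rho_K(B)/\fp$ is generated as a $K$-algebra by the images of the elements of the multiplicative monoid underlying $B$ (this follows from the universal property) and has transcendence degree $d$ over $K$, so I can pick $b_1,\dots,b_d\in B$ whose images in $A'$ are algebraically independent over $K$. Inverting the non-zero-divisor $b_1\cdots b_d$ yields a domain $A''$ of dimension $d$ together with a dominant morphism $\phi\colon\Spec A''\to\G_m^d$ sending the $i$-th coordinate to $b_i$; since $K=\overline K$, Chevalley's theorem (together with the existence of $\overline K$-points on nonempty finite-type schemes) shows that $\phi(\Spec A''(K))$ contains $U(K)$ for some dense Zariski-open $U\subseteq\G_m^d$. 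On the other hand, every $x\in\Spec A''(K)$ restricts along $\rho_K(B)\twoheadrightarrow A''$ to a band homomorphism $\chi_x\colon B\to K$ with all $\chi_x(b_i)\neq0$, and $v\circ\chi_x\in\Sigma_B$; under the coordinate-evaluation map $\mathrm{ev}=(\mathrm{ev}_{b_1},\dots,\mathrm{ev}_{b_d})\colon\Sigma_B\to(\R\cup\{\infty\})^d$, $\psi\mapsto(\psi(b_1),\dots,\psi(b_d))$, the element $v\circ\chi_x$ goes to the coordinate-wise valuation of $\phi(x)\in\G_m^d(K)$. Since the value group $\Gamma$ of $v$ is dense in $\R$, and for each vector in $\Gamma^d$ the $K$-points of $\G_m^d$ with that coordinate-wise valuation form a Zariski-dense set (which therefore meets $U$), the image of the composite $\Spec A''(K)\to\Sigma_B\to(\R\cup\{\infty\})^d$ contains $\Gamma^d$; hence $\mathrm{ev}(\Sigma_B)$ is dense in $\R^d$. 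But $\mathrm{ev}$ is affine on each cell of the polyhedral complex $\Sigma_B$ of \autoref{rem:dimension}, so $\mathrm{ev}(\Sigma_B)\cap\R^d$ is a finite union of polyhedra of dimension $\leq\dim\Sigma_B$, and such a set can be dense in $\R^d$ only if $\dim\Sigma_B\geq d$. This is the asserted inequality, which then holds for an arbitrary field by the base-change remark above.

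The "in particular" clause is immediate: if $\Hom(B,\T)=\{0\}$ then $\Sigma_B$ is a point, so $\dim X_{B,K}=0$ for every field $K$, and a zero-dimensional finite-type $K$-scheme has only finitely many $K$-points, which by \autoref{prop: associated K-algebra} are precisely the elements of $\Hom(B,K)$. The main obstacle is not a single hard estimate but the interface between the algebraic and tropical sides: one must pin down the polyhedral structure $\Sigma_B$ of \autoref{rem:dimension} precisely enough to know that the coordinate-evaluation maps $\mathrm{ev}_b$ are piecewise linear, and one must check carefully that post-composition with $v$ is compatible with the evaluation maps on both sides, so that the square relating $\Spec A''(K)\to\G_m^d(K)$ to $\Hom(B,K)\to\Sigma_B$ commutes. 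Granting that, the remaining ingredients — Chevalley's theorem, Zariski density of prescribed-valuation points in a torus, and the fact that finitely many polyhedra of dimension $<d$ cannot cover $\R^d$ — are elementary; in effect the argument reproves by hand the one consequence of the tropical structure theorem that is needed here, namely $\dim X\leq\dim\Trop(X)$.
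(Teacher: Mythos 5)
Your proof is correct, but it takes a genuinely different route from the paper's. The paper's argument is short and relies squarely on the Bieri--Groves theorem: it establishes the commuting square relating $X(K) = \Hom_K(\rho_K(B),K)$ and $\Hom(B,K)$ on top with $\Trop(X)$ and $\Hom(B,\T)$ on the bottom, invokes Bieri--Groves to get the equality $\dim X = \dim\Trop(X)$, and then uses the inclusion $\Trop(X)\hookrightarrow\Hom(B,\T)$ from \autoref{prop: associated K-algebra}\eqref{alg2} and \autoref{rem:dimension} to obtain the desired inequality. Notably it works with \emph{any} valuation $v: K \to \T$, including the trivial one, so no base-change is needed. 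Your argument instead bypasses Bieri--Groves entirely: you base-change to an algebraically closed valued field, pick a maximal-dimensional component, extract a transcendence basis from the images of elements of $B$, and then use Chevalley, density of the value group, and Zariski-density of fibers of the coordinatewise valuation on a torus to show directly that the evaluation map $\mathrm{ev}:\Sigma_B\to(\R\cup\{\infty\})^d$ has dense image, whence $\dim\Sigma_B\geq d$ by piecewise-linearity. As you rightly observe at the end, this amounts to reproving by hand the one direction of Bieri--Groves that is actually needed here, namely $\dim X \leq \dim\Trop(X)$. What your approach buys is self-containment and a somewhat more explicit picture of \emph{why} $\Sigma_B$ must have the requisite dimension; what the paper's approach buys is brevity and a cleaner separation between the band-theoretic reduction (\autoref{prop: associated K-algebra}) and the deep tropical input. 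Two minor caveats that you already flag: you need $B$ to be finitely presented (not merely finitely generated) so that $\Sigma_B$ has a genuine polyhedral structure on which the evaluation maps are piecewise affine --- the paper's formulation sidesteps this by working with the covering dimension of $\Hom(B,\T)$ directly, cf.\ \autoref{rem:dimension}; and you conflate the additive ($\overline\R$) and multiplicative ($\T=\R_{\geq0}$) pictures, which the paper handles carefully via the map $\exp^n$.
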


Applying \autoref{thm:GeneralizedLafforgue1} to $B=F_M$ immediately gives:

\begin{cor}[Lafforgue]
\label{cor:Lafforgue}
If $M$ is a rigid matroid, then $\cX_M(K)$ is finite for every field $K$. 
\end{cor}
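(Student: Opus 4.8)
The plan is to obtain this as the immediate special case $B = F_M$ of \autoref{thm:GeneralizedLafforgue1}. First I would recall from \cite{Baker-Lorscheid20} (see also \autoref{sec:reformulation}) that the foundation $F_M$ of $M$ is a finitely generated band and that it represents the rescaling class functor, so that $\cX_M(P) = \Hom(F_M, P)$ functorially in the pasture $P$. In particular $\cX_M(K) = \Hom(F_M, K)$ for every field $K$, so it suffices to prove that $\Hom(F_M, K)$ is finite.

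Second, I would use the characterization of rigidity recalled in \autoref{sec:reformulation} and established in \autoref{Appendix:Matroid Polytope Subdivisions}: the matroid $M$ is rigid if and only if $\cX_M(\T) = \Hom(F_M, \T)$ is a singleton, i.e.\ equals $\{0\}$. Thus the hypothesis that $M$ is rigid translates into the statement $\Hom(F_M, \T) = \{0\}$.

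Finally, since $F_M$ is finitely generated, the ``in particular'' clause of \autoref{thm:GeneralizedLafforgue1} applies verbatim with $B = F_M$: from $\Hom(F_M, \T) = \{0\}$ we conclude that $\dim \Sigma_{F_M} = 0$, and hence that $X_{F_M, K}(K) = \Hom(F_M, K)$ is finite for every field $K$. Combining this with the first step yields that $\cX_M(K)$ is finite, as desired. There is essentially no obstacle at this level: all of the genuine content has been pushed into \autoref{thm:GeneralizedLafforgue1} and into the two translations on which it relies --- the representability of $\cX_M$ by the foundation, and the polytope-subdivision description of rigidity --- so the corollary is a formal consequence once those inputs are in hand.
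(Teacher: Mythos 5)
Your proposal is correct and matches the paper's argument exactly: the paper obtains the corollary by applying \autoref{thm:GeneralizedLafforgue1} to $B = F_M$, using the representability $\cX_M(P) = \Hom(F_M,P)$ and the translation of rigidity into $\Hom(F_M,\T) = \{0\}$. You simply spell out these translation steps more explicitly than the paper, which states the deduction as immediate.
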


In the terminology of \autoref{rem:LocalDressian}, \autoref{thm:GeneralizedLafforgue1} in the case $B=F_M$ says precisely that for any field $K$, the dimension of the reduced realization space of $M$ over $K$ is bounded above by the dimension of the reduced Dressian of $M$.

\subsection{A relative version of Lafforgue's theorem}

Rudi Pendavingh (private communication) asked if there might be a relative version of Lafforgue's theorem with respect to minors of $M$. 
More precisely, Pendavingh asked the following question: Suppose $N$ is an (embedded) minor of $M$ with the property that a valuated matroid structure on $M$ is determined, up to rescaling equivalence, by its restriction to $N$.
Is it then true that, for every field $K$, there are (up to projective equivalence) at most finitely many extensions of each $K$-representation of $N$ to a $K$-representation of $M$?

We answer Pendavingh's question in the affirmative, proving the following algebraic generalization of \autoref{cor:Lafforgue}:

\begin{thm}
\label{thm:GeneralizedLafforgue2}
Let $K$ be an algebraically closed valued field, and let $v : K \to \T$ be a non-trivial valuation.
If $f : B_1 \to B_2$ is a homomorphism of finitely generated bands, then the fiber dimension of $f_K : \Hom(B_2,K) \to \Hom(B_1,K)$ is bounded above by the fiber dimension of $f_{\T} : \Hom(B_2,\T) \to \Hom(B_1,\T)$, i.e.,
if $x \in \Hom(B_1,K)$ and $x'$ is the image of $x$ in $\Hom(B_1,\T)$, then $\dim f_K^{-1} (x) \leq \dim f_{\T}^{-1} (x')$.

In particular, setting $B_1 = F_N$ and $B_2 = F_M$ when $N$ is an embedded minor of a matroid $M$, we find that if the induced map $\cX_N(\T) \to \cX_M(\T)$ has finite fibers
(i.e., a valuated matroid structure on $N$ has at most finitely many extensions to $M$, up to rescaling equivalence)
then, for every field $k$, the natural map $\cX_N(k) \to \cX_M(k)$ has finite fibers, i.e., every $k$-representation of $N$ has at most finitely many extensions to $M$, up to projective equivalence.
\end{thm}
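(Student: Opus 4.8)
The plan is to bound each fiber of $f_K$ by tropicalizing it and invoking the Bieri--Groves theorem; this is the relative analogue of the tropical argument underlying \autoref{thm:GeneralizedLafforgue1}. Fix $x\in\Hom(B_1,K)$ and let $x'=v\circ x\in\Hom(B_1,\T)$ be its image. If $f_K^{-1}(x)=\emptyset$ there is nothing to prove, so assume otherwise. By \autoref{prop: associated K-algebra}, $f$ induces a homomorphism $\rho_K(B_1)\to\rho_K(B_2)$ of finitely generated $K$-algebras, hence a morphism $X_{B_2,K}\to X_{B_1,K}$ of finite type affine $K$-schemes. Since $K$ is algebraically closed, $x$ corresponds to a $K$-point of $X_{B_1,K}$, and under the identifications $\Hom(B_i,K)=X_{B_i,K}(K)$ the fiber $f_K^{-1}(x)$ is exactly the set of $K$-points of the scheme-theoretic fiber $Z_x:=X_{B_2,K}\times_{X_{B_1,K}}\Spec K$, again a finite type affine $K$-scheme. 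In particular $\dim f_K^{-1}(x)=\dim Z_x$, so it suffices to show $\dim Z_x\leq\dim f_\T^{-1}(x')$.

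Choose a finite generating set $b_1,\dots,b_N$ of the band $B_2$. Its images generate the $K$-algebra $\rho_K(B_2)$, yielding a closed immersion $X_{B_2,K}\hookrightarrow\A^N_K$, and the same elements give an embedding $\Hom(B_2,\T)\hookrightarrow\T^N$. Under these embeddings the coordinatewise valuation map $y\mapsto\bigl(v(y(b_1)),\dots,v(y(b_N))\bigr)$ sends a $K$-point $y$ of $X_{B_2,K}$ to the point of $\Hom(B_2,\T)\subseteq\T^N$ represented by $v\circ y$. Let $\Trop(Z_x)\subseteq\T^N$ be the closure of the image of $Z_x(K)$ under this map. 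The crucial observation is the set-theoretic containment
\[
\Trop(Z_x)\ \subseteq\ f_\T^{-1}(x')\ \subseteq\ \T^N .
\]
Since $f_\T^{-1}(x')$ is a closed polyhedral subset of $\T^N$, it is enough to check that $v(y)$ lies in it for every $y\in Z_x(K)=f_K^{-1}(x)$, and this is immediate: $v\circ y$ is a band homomorphism $B_2\to\T$ with $f_\T(v\circ y)=(v\circ y)\circ f=v\circ(y\circ f)=v\circ x=x'$.

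To conclude, recall that $K$ is algebraically closed and $v$ is nontrivial, so the Bieri--Groves theorem (equivalently, the Fundamental Theorem of Tropical Geometry), applied to the irreducible components of $Z_x$ and stratified by the coordinate hyperplanes of $\A^N_K$, shows that $\Trop(Z_x)$ is a finite polyhedral complex of dimension $\dim Z_x$. Since it is contained in $f_\T^{-1}(x')$, monotonicity of dimension (cf.\ \autoref{rem:dimension}) gives
\[
\dim f_K^{-1}(x)\ =\ \dim Z_x\ =\ \dim\Trop(Z_x)\ \leq\ \dim f_\T^{-1}(x'),
\]
as desired. For the matroid statement, take $B_1=F_N$ and $B_2=F_M$ with $f\colon F_N\to F_M$ the morphism of foundations corresponding to restriction of representations from $M$ to $N$; given an arbitrary field $k$, apply the band inequality over the algebraically closed field $\overline{k(t)}$ equipped with an extension of the nontrivial $t$-adic valuation, and use that $\cX_N(k)\hookrightarrow\cX_N(\overline{k(t)})$ and $\cX_M(k)\hookrightarrow\cX_M(\overline{k(t)})$ are injective and compatible with the minor maps, so that finiteness of fibers over $\T$ forces finiteness of fibers over $k$.

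The only step requiring genuine care is the containment $\Trop(Z_x)\subseteq f_\T^{-1}(x')$, i.e.\ the compatibility of the closed immersion $X_{B_2,K}\hookrightarrow\A^N_K$, the embedding $\Hom(B_2,\T)\hookrightarrow\T^N$, and the maps induced by $f$ with coordinatewise valuation; once the functoriality of $\rho_K$ and the description of $\Hom(-,\T)$ recalled above are in hand, this is a routine unwinding of definitions, exactly as in the proof of \autoref{thm:GeneralizedLafforgue1}. One should also note the mild point that $\dim f_\T^{-1}(x')$ is meaningful even when $B_1,B_2$ are only finitely generated rather than finitely presented, which is covered by \autoref{rem:dimension}.
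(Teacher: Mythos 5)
Your proof is correct and follows essentially the same strategy as the paper's own argument: tropicalize the scheme-theoretic fiber $Z_x$ over $x$, show that $\Trop(Z_x)$ lands inside $f_\T^{-1}(x')$ by unwinding compatibility of the coordinatewise valuation with the maps induced by $f$, and then invoke Bieri--Groves (together with Payne's description of $\Trop$ as the closure of the image of $K$-points) to equate $\dim \Trop(Z_x)$ with $\dim Z_x$. The paper handles the bookkeeping by extending generators of $B_1$ to generators of $B_2$, whereas you phrase the same compatibility in terms of functoriality of $\rho_K$ and of $\Hom(-,\T)$; these are cosmetically different ways of making the same diagram commute. The one substantive point your writeup adds that the paper leaves implicit is the reduction from an arbitrary field $k$ to an algebraically closed nontrivially valued field (via $\overline{k(t)}$ with an extension of the $t$-adic valuation, using injectivity of $\cX_{(-)}(k)\hookrightarrow\cX_{(-)}(\overline{k(t)})$). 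One small caveat: when $B_2$ is merely finitely generated rather than finitely presented, $f_\T^{-1}(x')$ is closed in $\T^N$ (being cut out by closed conditions) but need not be a polyhedral complex; you only need closedness and covering-dimension monotonicity, so this does not affect the argument, but calling it a ``closed polyhedral subset'' is slightly stronger than what is justified or needed.
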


Note that Lafforgue's theorem (\autoref{cor:Lafforgue}) follows from the special case of \autoref{thm:GeneralizedLafforgue2} where $N$ is the trivial (empty) matroid and $f_{\T} : \Hom(B_2,\T) \to \Hom(B_1,\T)$ has finite fibers.

\section{Some examples}

In this section we present examples of both rigid and non-rigid matroids (see \autoref{Appendix:Bands and Pastures} for some details on our notation).

\begin{ex}[Dress--Wenzel]
In \cite[Theorem 5.11]{Dress-Wenzel92b}, Dress and Wenzel showed that if the inner Tutte group $F_M^\times$ of the matroid $M$ is finite, then $M$ is rigid. From our point of view, this is clear, since the inner Tutte group is the multiplicative group of the foundation (cf.~\cite[Corollary 7.13]{Baker-Lorscheid21b})
and a non-trivial homomorphism $F_M \to {\mathbb T}$ of pastures would give, in particular, a nonzero group homomorphism $F_M^\times \to ({\mathbb R},+)$; however the only torsion element of $({\mathbb R},+)$ is 0. 

For example:
\begin{enumerate}
\item The foundation of the Fano matroid $F_7$ is ${\mathbb F}_2$, so $F_7$ is rigid. More generally, any binary matroid has foundation equal to either $\F_1^\pm$ or $\F_2$ \cite[Corollary 7.32]{Baker-Lorscheid21b} and so it is rigid.
\item The foundation of the ternary spike $T_8$ is $\F_3$ (see \cite[Proposition 8.9]{Part2}), so $T_8$ is also rigid.
\item Dress and Wenzel prove in \cite[Corollary 3.8]{Dress-Wenzel92b} that the inner Tutte group of any finite projective space of dimension at least 2 is finite, which provides a wealth of additional examples of rigid matroids. 
\item Since the automorphism group of the ternary affine plane $M = {\rm AG}(2,3)$ acts transitively, all single-element deletions are isomorphic to each other. 
Let $M'$ be any of these deletions. By \cite[Proposition 6.2]{Part2}, the foundation of $M'$ is equal to the hexagonal (or sixth-root-of-unity) partial field  $\H \ = \ \pastgen\Funpm{T}{T^3 + 1, T-T^2 - 1}$, whose multiplicative group is the group of sixth roots of unity in $\C$. Therefore $M'$ is rigid.
\end{enumerate}
\end{ex}


It is not true that a matroid $M$ is rigid if and only if its inner Tutte group (or, equivalently, its foundation) is finite. For example:

\begin{ex}[suggested by Rudi Pendavingh]
Let $M$ be the Betsy Ross matroid (cf.~\cite[Figure 3.3]{vanZwam09}, where $M$ is also called $B_{11}$).
Using the Macaulay2 software described in \cite{Chen-Zhang}, we have checked that $F_M$ is the (infinite) golden ratio partial field $\G \ = \ \pastgen\Funpm{T}{T^2 - T - 1}$.
One checks easily that ${\rm Hom}({\mathbb G},{\mathbb T})$ is trivial, so $M$ is rigid; in particular, the converse of the statement ``$F_M$ finite implies $M$ rigid'' is not true.
It is also easy to see directly that ${\mathbb G}$ admits only finitely many homomorphisms to any field.
\end{ex}

\begin{ex}
The matroid $U_{2,4}$ is not rigid, since its foundation is the near-regular partial field $\U \ = \ \pastgen\Funpm{T_1,T_2}{T_1 + T_2 - 1}$, which admits infinitely many different homomorphisms to ${\mathbb T}$ (map $T_1$ to 1 and $T_2$ to any element less than or equal to 1, or vice-versa).
And for any field $K$, the reduced realization space $\cX_M(K)$ is equal to $K \setminus \{ 0,1 \}$, so in particular it is infinite whenever $K$ is.
The base polytope of $U_{2,4}$ is an octahedron, which admits a regular matroid decomposition into two tetrahedra (see \cite[p.~189]{Maclagan-Sturmfels15} for a nice visualization).
\end{ex}

\begin{ex}
 The non-Fano matroid $M=F_7^-$ is not rigid, and it provides an example for which the dimension of the reduced realization spaces $\cX_M(K)$ and $\cX_M(\T)$ jumps. The foundation of $M$ is the dyadic partial field $\D=\pastgen\Funpm{T}{T+T-1}$ by \cite[Prop.~8.4]{Part2}, and there is at most one homomorphism $F_M=\D\to K$ into any field $K$, sending $T$ to the multiplicative inverse of $2$ (if it exists, i.e.,\ if $\char K\neq 2$). In contrast, there are infinitely many homomorphisms $\D\to\T$ (parametrized by the image of $f(T)\in\T$). So $\dim\cX_M(K)=0<1=\dim\cX(\T)$.
\end{ex}

\section{Proof of the main theorems}

The key fact needed for the proof of \autoref{thm:GeneralizedLafforgue1} is the following theorem of Bieri and Groves \cite[Theorem A]{Bieri-Groves84}, which is a cornerstone of tropical geometry. For the statement, recall that a {\em semi-valuation} from a ring $R$ to $\overline\R=\R\cup\{+\infty\}$ is a map $v : R \to \overline\R$ such that $v(0)=+\infty$, $v(xy)=v(x)+v(y)$, and $v(x+y) \geq \min \{ v(x),v(y) \}$ for all $x,y \in R$.
(The map $v$ is called a {\em valuation} if, in addition, $v(x)=+\infty$ implies that $x=0$.) If $R$ is a $K$-algebra, where $K$ is a valued field (i.e., a field endowed with a valuation $v : K \to \overline\R$), a {\em $K$-semi-valuation} is a semi-valuation which restricts to the given valuation on $K$.

\begin{thm}[Bieri--Groves] \label{thm: Bieri-Groves}
Let $K$ be a field endowed with a real valuation $v$,
and suppose $R$ is a finitely generated $K$-algebra with Krull dimension equal to $n$, having generators $T_1,\ldots,T_n$. Let $X = {\rm Spec}(R)$ be the corresponding affine $K$-scheme.
 Then the set
\[
{\rm Trop}(X) := \{ (v(T_1),\ldots,v(T_n)) \; | \; v : R \to \overline\R \textrm{ is a $K$-semi-valuation} \}
\]
is a polyhedral complex of dimension $\dim({\rm Trop}(X)) = \dim X$.
\end{thm}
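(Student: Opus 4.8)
The plan is to deduce the theorem from the classical Bieri--Groves theorem --- that the tropicalization of a closed subvariety of an algebraic torus over a real-valued field is a polyhedral complex of the expected dimension (the geometric content of \cite[Theorem A]{Bieri-Groves84}) --- by a reduction accounting for the passage from a torus to a general affine scheme and from valuations to semi-valuations. The starting observation is that, since $T_1,\dots,T_n$ generate $R$, the map $X \to \A^n_K$ is a closed immersion; I would therefore pull back the torus-orbit stratification $\A^n_K = \bigsqcup_{S \subseteq \{1,\dots,n\}} O_S$, where $O_S = \{x : x_i = 0 \text{ for } i \in S,\ x_i \neq 0 \text{ for } i \notin S\} \cong \G_m^{\{1,\dots,n\}\setminus S}$. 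A $K$-semi-valuation $v$ on $R$ has prime kernel $\fp = \ker v$; setting $S = \{i : T_i \in \fp\}$, the point $(v(T_1),\dots,v(T_n))$ lies in the stratum $\R^{\{1,\dots,n\}\setminus S} \subseteq \overline\R^n$, and $v$ descends to an honest $K$-valuation on the finitely generated domain $R/\fp$, which maps to the torus $O_S$; conversely every such valuation arises this way. Hence $\Trop(X) = \bigsqcup_S \Trop(X \cap O_S)$, where $\Trop(X \cap O_S) \subseteq \R^{\{1,\dots,n\}\setminus S}$ is the ordinary tropicalization, with respect to the torus coordinates, of the closed subscheme $X \cap O_S$ of $O_S$.

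Now the classical Bieri--Groves theorem --- applied to the irreducible components of $(X \cap O_S)_{\mathrm{red}}$, and valid over any real-valued field, the valuation possibly trivial (in which case the tropicalization is a fan) --- shows that each $\Trop(X \cap O_S)$ is a polyhedral complex with $\dim \Trop(X \cap O_S) = \dim(X \cap O_S)$. To see that these strata assemble into a genuine polyhedral complex inside the tropical toric variety $\overline\R^n = \Trop(\A^n_K)$, I would invoke the standard compatibility (see, e.g., \cite[Ch.~6]{Maclagan-Sturmfels15}) that tropicalizing the Zariski closure of $X \cap O_S$ in $\A^n_K$ produces the Euclidean closure of $\Trop(X \cap O_S)$ in $\overline\R^n$, which is again polyhedral; since $X$ is the union of these finitely many closed subschemes, $\Trop(X)$ is a finite polyhedral complex. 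The dimension equality then falls out: $\dim \Trop(X) = \max_S \dim \Trop(X \cap O_S) = \max_S \dim(X \cap O_S) = \dim X$, the last equality because $X$ is covered by the closed subschemes $\overline{X \cap O_S}$. (Morally the upper bound $\dim \Trop(X) \leq \dim X$ is Abhyankar's inequality --- the rational rank of the value group of $v$ exceeds that of $v|_K$ by at most $\mathrm{trdeg}_K \mathrm{Frac}(R/\fp) \leq \dim X$ --- but converting this into a bound on the dimension of the \emph{set} $\Trop(X)$ is exactly what the polyhedral structure from Bieri--Groves supplies; and the lower bound may alternatively be seen directly, since after discarding generators vanishing on $X$ the dense open $X \cap \G_m^n \hookrightarrow \G_m^n$ is a subvariety of the full torus whose tropicalization, of dimension $\dim X$, is contained in $\Trop(X)$.)

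The main obstacle is the classical Bieri--Groves theorem itself, which I am using as a black box: the assertion that $\Trop(Y)$ for a subvariety $Y \subseteq \G_m^n$ is polyhedral at all, and that it is pure of dimension $\dim Y$. Polyhedrality is the deep input; it can be obtained either from the original combinatorial induction of \cite{Bieri-Groves84} on finitely generated modules over Laurent polynomial rings, or, in modern terms, from the theory of Gr\"obner complexes over a valued field, which realizes $\Trop(Y)$ as the support of a finite polyhedral subcomplex. Purity then comes from a Noether-normalization-style argument: a sufficiently generic monomial map $Y \to \G_m^d$ with $d = \dim Y$ is finite, hence surjective on tropicalizations, and, being piecewise linear on the finite complex $\Trop(Y)$, it forces a maximal cell of $\Trop(Y)$ to map onto a $d$-dimensional set. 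Granting this, the only genuinely delicate remaining point in the present setting is the bookkeeping at the boundary strata where some $v(T_i) = \infty$, together with the verification that these strata fit together into a polyhedral complex of $\overline\R^n$.
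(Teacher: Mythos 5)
The paper does not give a proof of this theorem beyond citing \cite[Theorem A]{Bieri-Groves84} and remarking that the general case ``follows immediately [from the irreducible case] by decomposing $X$ into irreducible components.'' Your proposal is correct but takes a genuinely different, and more careful, route: you further reduce to the torus version of Bieri--Groves via the orbit stratification of $\A^n_K$, writing $\Trop(X) = \bigsqcup_S \Trop(X \cap O_S)$ with the stratum $S$ corresponding to semi-valuations whose kernel meets $\{T_1,\ldots,T_n\}$ in exactly $\{T_i : i \in S\}$. This actually fills in something the paper's one-line remark elides: Bieri and Groves work with $\R$-valued valuations on the fraction field of a finitely generated $K$-algebra domain, so their theorem directly describes only the \emph{finite} part of $\Trop(X)$ and says nothing about tuples with some $v(T_i) = +\infty$, which arise from semi-valuations with nonzero kernel and which the paper's definition expressly allows. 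Your stratification handles these boundary contributions cleanly, and the dimension formula then follows because the locally closed pieces $X\cap O_S$ cover $X$. What the paper's approach buys is brevity, since Bieri--Groves Theorem A is already stated for arbitrary finitely generated $K$-algebra domains and not only subvarieties of tori (so no orbit stratification is needed for polyhedrality and dimension on the finite part); what yours buys is a complete treatment of the $+\infty$ strata and compatibility with the form of the theorem most familiar from the modern tropical literature. Both correctly isolate the polyhedrality of tropicalizations as the indispensable black box.
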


\begin{rem}
Bieri and Groves assume that $X$ is irreducible and show, more precisely, that ${\rm Trop}(X)$ has {\em pure} dimension $n$. Our formulation of the Bieri--Groves theorem (which does not include the purity statement) follows immediately from theirs by decomposing $X$ into irreducible components.
\end{rem}

\begin{rem} 
More or less by definition, a semi-valuation on a ring $R$ is precisely the same thing as a homomorphism from $R$ to $\T$ in the category of bands, and if $K$ is a valued field then a $K$-semi-valuation on $R$ is the same thing as a homomorphism from $R$ to $\T$ which restricts to the given homomorphism $v : K \to \T$ on $K$.
\end{rem}

Let $K$ be a field, and let $\Alg_K$ denote the category of $K$-algebras, i.e.\ ring extensions $R$ of $K$ together with $K$-linear ring homomorphisms. 
We write $\Hom_K(R,S)$ for the set of $K$-algebra homomorphisms between two $K$-algebras $R$ and $S$.
Given a band $B$, we define the \emph{associated $K$-algebra} as
\[
 \rho_K(B) \ = \ K[B] \, / \, \gen{N_B},
\]
where $K[B]$ is the monoid algebra over $K$ and the elements of the nullset $N_B$ are interpreted as elements of $K[B]$ (cf.~\autoref{df:band}). 
It comes with a band homomorphism $\alpha_B:B\to \rho_K(B)$, which maps $a$ to $[a]$.

The other main ingredient needed for the proof of \autoref{thm:GeneralizedLafforgue1} is the following technical but important result:

\begin{prop}\label{prop: associated K-algebra}
 Let $K$ a field, $B$ be a band and $R=\rho_K(B)$ the associated $K$-algebra.
 \begin{enumerate}
  \item\label{alg1} The homomorphism $\alpha_B:B\to \rho_K(B)$ is initial for all homomorphisms from $B$ to a $K$-algebra, i.e.,\ for every $K$-algebra $S$ the natural map
  \[
   \Hom_K(R,S) \ \stackrel{\alpha_B^\ast}{\longrightarrow} \ \Hom(B,S)
  \]
  is a bijection.
  \item\label{alg2} Assume we are given a valuation $v_K : K \to \T$, and that
 $B$ is finitely generated by $a_1,\dotsc,a_n$. Let $T_i=\alpha_B(a_i)$ for $i=1,\dotsc,n$, and let $X=\Spec R$. Let $\exp^n:\overline\R^n\to\T^n$ be the coordinate-wise exponential map. Then the $T_i$ generate $R$ as a $K$-algebra, and 
  \[
   \exp^n \Big(\Trop(X)\Big) \ \subset \ \Hom(B,\T)
  \]
  as subsets of $\T^n$. 
 \end{enumerate}
\end{prop}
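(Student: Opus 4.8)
The plan is to obtain part (1) by unwinding the universal properties of the two constructions involved, and then to derive part (2) more or less formally from part (1) together with the identification (noted in the remark above identifying semi-valuations on a ring with homomorphisms of bands to $\T$) of semi-valuations with homomorphisms of bands to $\T$. The only step that requires genuine attention is part (1), and I expect the main obstacle to be bookkeeping: one must line up the definitions of the contracted monoid algebra, of a band and its nullset (\autoref{df:band}), and of a band homomorphism, and then verify that the condition ``$f$ carries $N_B$ into the nullset of $S$'' is equivalent to ``the $K$-algebra map associated to $f$ annihilates the ideal $\gen{N_B}$''. A point deserving care throughout is the distinguished element $\epsilon=-1\in B$: the relation $[1_B]+[\epsilon]\in N_B$ forces $[\epsilon]=-1$ in $R=\rho_K(B)$, and this is what makes the signs work out.

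For part (1), recall that the contracted monoid algebra $K[B]$ has the universal property that, for every $K$-algebra $S$, the $K$-algebra homomorphisms $K[B]\to S$ are in natural bijection with the multiplication-preserving maps $B\to S$ sending $0_B\mapsto 0$ and $1_B\mapsto 1$, the bijection being restriction along $b\mapsto[b]$. On the other hand, unwinding \autoref{df:band}, a band homomorphism $B\to S$ (with $S$ viewed as a band) is precisely such a multiplicative map $f\colon B\to S$ with the additional property that $\sum_i n_i\, f(a_i)=0$ in $S$ whenever $\sum_i n_i\,[a_i]\in N_B$. If $\tilde f\colon K[B]\to S$ is the $K$-algebra homomorphism corresponding to $f$, this additional property says exactly that $\tilde f$ kills the image in $K[B]$ of every element of $N_B$, i.e.\ that $\tilde f$ factors (necessarily uniquely) through the quotient $R=K[B]/\gen{N_B}$. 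The resulting bijection $\Hom_K(R,S)\to\Hom(B,S)$, $g\mapsto g\circ\alpha_B$, is manifestly natural in $S$, which is the claim.

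For part (2), first observe that since $a_1,\dots,a_n$ generate $B$ as a band, every element of $B$ is either $0_B$ or a product of the $a_i$ and the element $\epsilon$ (the empty product being $1_B$); since $[0_B]=0$ and $[\epsilon]=-1$ both lie in the image of $K$ inside $R$, it follows that $T_1=\alpha_B(a_1),\dots,T_n=\alpha_B(a_n)$ generate $R$ as a $K$-algebra, so that $\Trop(X)$ is defined as in \autoref{thm: Bieri-Groves}. Now let $\xi=(v(T_1),\dots,v(T_n))\in\Trop(X)$, arising from a $K$-semi-valuation $v\colon R\to\overline\R$. By the remark above, $\exp\circ v\colon R\to\T$ is a homomorphism of bands, so $\beta:=(\exp\circ v)\circ\alpha_B\colon B\to\T$ is a band homomorphism. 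Under the map $\Hom(B,\T)\hookrightarrow\T^n$ sending $g$ to $(g(a_1),\dots,g(a_n))$ --- which is injective because the $a_i$ generate $B$ --- the homomorphism $\beta$ goes to $\big(\beta(a_1),\dots,\beta(a_n)\big)=\big(\exp(v(T_1)),\dots,\exp(v(T_n))\big)=\exp^n(\xi)$. Hence $\exp^n(\xi)\in\Hom(B,\T)$, and since $\xi\in\Trop(X)$ was arbitrary we conclude $\exp^n(\Trop(X))\subseteq\Hom(B,\T)$, as asserted.
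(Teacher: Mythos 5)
Your proposal is correct and follows essentially the same route as the paper's proof: part (1) via the universal property of the monoid algebra $K[B]$ together with the observation that the band-homomorphism condition is exactly what lets the induced $K$-algebra map kill $\gen{N_B}$, and part (2) by composing a $K$-semi-valuation $v:R\to\overline\R$ with $\exp$ to obtain a band homomorphism $R\to\T$ and then pre-composing with $\alpha_B$. The only cosmetic difference is that the paper establishes the bijection in (1) by proving injectivity and surjectivity separately, whereas you obtain it in one step from the universal property of $K[B]$; your side remark that $[1_B]+[\epsilon]\in N_B$ forces $[\epsilon]=-1$ in $R$ is a nice clarification of why the element $-1$ need not be included among the generators in (2).
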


\begin{proof}
 We begin with \eqref{alg1}. The map $\alpha_B^\ast$ is injective since $R$ is generated by the subset $\alpha_B(B)$, and therefore every homomorphism $f:R\to S$ is determined by the composition $f\circ\alpha_B:B\to S$. In order to show that $\alpha_B^\ast$  is surjective, consider a band homomorphism $f:B\to S$, which is, in particular a multiplicative map. Therefore it extends (uniquely) to a $K$-linear homomorphism $\hat{f}:K[B]\to S$ from the monoid algebra $K[B]$ to $S$. For every $\sum a_i\in  N_B$, we have $\sum f(a_i)\in N_S$ by the definition of a band homomorphism. By the definition of $N_S$, this means that $\sum f(a_i)=0$ in $S$. Thus $\hat{f}$ factorizes through $\bar f:R=K[B]/\gen{N_B}\to S$, and, by construction, we have $f=\bar f\circ\alpha_B=\alpha_B^\ast(\bar f)$. This establishes \eqref{alg1}.
 
 We continue with \eqref{alg2}. Since $B$ is generated by $a_1,\dotsc,a_n$ as a pointed monoid and $\alpha_B(B)$ generates $R$ as a $K$-algebra, $R$ is generated as a $K$-algebra by $T_1,\dotsc,T_n$. In order to verify that $\exp^n(\Trop(X))\subset\Hom(B,\T)$, consider a point $(v(T_1),\dotsc,v(T_n))\in \Trop(X)$, where $v:R\to\overline\R$ is a $K$-semi-valuation. Post-composing $v$ with $\exp$ yields a seminorm $v':R\to\T$, which is, equivalently, a band homomorphism. Pre-composing $v'$ with $\alpha_B$ yields a band homomorphism $v'':B\to\T$, which is an element of $\Hom(B,\T)$. By construction, $\exp^n(v(T_1),\dotsc,v(T_n))=v''$, which establishes the last assertion.
\end{proof}

\begin{rem}\label{rem:dimension}
 \begin{enumerate}
\item Under the assumptions of \autoref{prop: associated K-algebra}.\eqref{alg2}, $\Hom(B,\T)$ embeds as a subspace of $\T^n$, which has a well-defined (Lebesgue) covering dimension in the sense of \cite[Chapter 3]{Pears75}. 
As discussed in \cite{Lorscheid22}, the subspace topology of $\Hom(B,\T)\subset\T^n$ is equal to the compact-open topology for $\Hom(B,\T)$ with respect to the discrete topology for $B$ and the natural order topology for $\T$, which shows that the dimension of $\Hom(B,\T)$ does not depend on the embedding into $\T^n$.
\item With the topologies just described, $\exp^n$ defines a continuous injection from $\Trop(X)$ to $\Hom(B,\T)$ which identifies the former with a closed subspace of the latter. In particular, \cite[Prop. 3.1.5]{Pears75} shows that $\dim \Trop(X) \leq \dim \Hom(B,\T)$.
  \item If in addition to the assumptions of \eqref{alg2}, $N_B$ is finitely generated as an ideal of $B^+$, then $\Hom(B,\T)$ is a tropical pre-variety in $\T^n$ and is therefore the underlying set of a finite polyhedral complex. The dimension of $\Hom(B,\T)$ as a polyhedral complex is equal to its covering dimension \cite[Theorem 2.7 and Section~{3.7}]{Pears75}. 
  \end{enumerate}
\end{rem}

\begin{proof}[Proof of \autoref{thm:GeneralizedLafforgue1}]
 Let $v : K \to \T$ be a valuation (which we can take to be the trivial valuation if we like).
 Let $\alpha_B:B\to R$ be the canonical homomorphism to the associated $K$-algebra $R=\rho_K(B)$, cf.~\autoref{prop: associated K-algebra}. 
 Let $a_1,\dotsc,a_n\in B$ be a set of generators for $B$, and for $i=1,\dotsc,n$ let $T_i=\alpha_B(a_i)$. By \autoref{prop: associated K-algebra}, the $T_i$
 generate $R$ as a $K$-algebra, i.e.,\ $R=K[T_1,\dotsc,T_n]/I$ for some ideal $I$. 
 
 Let $X=\Spec R$, so that $X(K) = \Hom_K(R,K)$. \autoref{prop: associated K-algebra} yields a commutative diagram
 \[
 \begin{tikzcd}
  X(K) = \Hom_K(R,K) \arrow[r, "\simeq"] \arrow[d] & \Hom(B,K) \ar[d] \\
  \Trop(X) \ar[right hook->,r,"\exp^n"] & \Hom(B,\T) 
 \end{tikzcd}
 \]
 where the right-hand vertical map is obtained by composing with $v : K \to \T$ and the 
 left-hand vertical map is induced by composing the embedding of $X(K) = \Hom_K(R,K)$ into $K^n$ via $\phi \mapsto (\phi(T_i))_{i=1}^n$ with the coordinate-wise absolute value $v_K^n:K^n\to\T^n$. 
 
By the Bieri--Groves theorem (\autoref{thm: Bieri-Groves}), the dimension of the affine variety $X$ is equal to the dimension of $\Trop(X)$, as defined in \autoref{rem:dimension}. 
Using \autoref{prop: associated K-algebra}\eqref{alg2} and \autoref{rem:dimension}\eqref{alg2}, we conclude that
 \[
  \dim\Big(X \Big) \ = \ \dim\Big(\Trop(X)\Big) \ \leq \ \dim\Big(\Hom(B,\T)\Big),
 \]
 as desired.
\end{proof}

\begin{proof}[Proof of \autoref{thm:GeneralizedLafforgue2}]
Suppose $f : B_1 \to B_2$ is a band homomorphism. Choose generators $x_1,\ldots,x_m$ for $B_1$. Completing $f(x_1),\ldots,f(x_m)$ to a set of generators for $B_2$ if necessary, we find a generating set $y_1,\ldots,y_n$ for $B_2$ with $m \leq n$ such that $f(x_i)=y_i$ for $i=1,\ldots,m$.
Setting $X = \Spec(\rho_K(B_1))$ and $Y = \Spec(\rho_K(B_2))$, and letting $\Trop(X)$ (resp. $\Trop(Y)$) be the tropicalization of $X$ with respect to $\alpha_{B_1}(x_1),\ldots,\alpha_{B_1}(x_m)$ (resp. $\alpha_{B_2}(y_1),\ldots,\alpha_{B_2}(y_n)$),
we obtain a commutative diagram
\[
 \begin{tikzcd}
  Y(K) \arrow{r}{f_K}\arrow[d]
  & X(K) \arrow[d] \\
   \Trop(Y) \arrow{r}{f_{\T}} \arrow[d,hook]
  & \Trop(X) \arrow[d,hook] \\
  \Hom(B_2,\T) \arrow[r]
  & \Hom(B_1,\T)
 \end{tikzcd}
\]
Since $\Trop(Y)$ is a closed subspace of $\Hom(B_2,\T)$ (resp. $\Trop(X)$ is a closed subspace of $\Hom(B_1,\T)$), 
it suffices to prove that if $x \in X(K)$ and $x' = \Trop(x) \in \Trop(X)$, then $\dim f_K^{-1} (x) \leq \dim f_{\T}^{-1} (x')$.


To see this, 
write $f_K^{-1}(x) = Z(K)$ with $Z$ an affine subscheme of $Y$.
If we pull back the functions $\alpha_{B_2}(y_1),\ldots,\alpha_{B_2}(y_n)$ to a set of generators for the affine coordinate ring of $Z$, we obtain a commutative diagram
\[
 \begin{tikzcd}
  Z(K) \arrow[r,hook] \arrow[d] & Y(K) \arrow[r] \arrow[d]
  & X(K) \arrow[d] \\
  \Trop(Z) \arrow[r,hook] & \Trop(Y) \arrow[r] 
  & \Trop(X)
 \end{tikzcd}
\]

Applying the Bieri--Groves theorem to $Z$, 
we find that the image of $Z(K)$ under ${\rm Trop}$ has dimension equal to $\dim f_{K}^{-1} (x)$.
In addition, the natural map $\Trop(Z) \to \Trop(Y)$ identifies $\Trop(Z)$ with a closed subspace of $\Trop(Y)$, since $\Trop(Z)$ (resp. $\Trop(Y)$) is the topological closure of $Z(K)$ (resp. $Y(K)$) in $\T^n$ (cf.~\cite[Proposition 2.2]{Payne09}).
By construction, $\Trop(Z)$ is in fact a closed subspace of $\dim f_{\T}^{-1} (x')$.
This means that $\dim f_K^{-1} (x) = \dim \Trop(Z) \leq \dim f_{\T}^{-1} (x')$ as desired.
\end{proof}

\appendix

\section{Pastures and Bands} \label{Appendix:Bands and Pastures}

More details pertaining to the following overview of bands and pastures can be found in \cite{Baker-Jin-Lorscheid}.

In this text, a \emph{pointed monoid} is a (multiplicatively written) commutative semigroup $A$ with identity $1$, together with a distinguished element $0$ that satisfies $0\cdot a=0$ for all $a\in A$. The \emph{ambient semiring of $A$} is the semiring $A^+=\N[A]/\gen 0$, which consists of all finite formal sums $\sum a_i$ of nonzero elements $a_i\in A$. Note that $A$ is embedded as a submonoid in $A^+$, where $0$ is identified with the empty sum. An \emph{ideal of $A^+$} is a subset $I$ that contains $0$ and is closed under both addition and multiplication by elements of $A^+$.

\begin{df} \label{df:band}
 A \emph{band} is a pointed monoid $B$ together with an ideal $N_B$ of $B^+$ (called the \emph{nullset}) such that for every $a\in A$, there is a unique $b\in A$ with $a+b\in N_B$. We call this $b$ the \emph{additive inverse of $a$}, and we denote it by $-a$. A \emph{band homomorphism} is a multiplicative map $f:B\to C$ preserving $0$ and $1$ such that $\sum a_i\in N_B$ implies $\sum f(a_i)\in N_C$. This defines the category $\Bands$.
\end{df} (

For a subset $S$ of $B^+$, we denote by $\genn{S}$ the smallest ideal of $B^+$ that contains $S$ and is closed under the \emph{fusion axiom} (cf.~\cite{Baker-Zhang23})
\begin{enumerate}[label=\rm(F)]
 \item if $c+\sum a_i$ and $-c+\sum b_j$ are in $\genn{S}$, then $\sum a_i+\sum b_j$ is in $\genn{S}$.
\end{enumerate}

\begin{df}
 A band $B$ is \emph{finitely generated} if it is finitely generated as a monoid. It is a \emph{finitely presented fusion band}, which we abbreviate by simply saying that $B$ is \emph{finitely presented}, if it is finitely generated and $N_B=\genn{S}$ for a finite subset $S$ of $N_B$.
\end{df}

The \emph{unit group of $B$} is the submonoid $B^\times=\{a\in B\mid ab=1\text{ for some }b\in B\}$ of $B$, which is indeed a group. 

\begin{df}
 A \emph{pasture} is a band $P$ with $P^\times=P-\{0\}$ and 
 \[
  N_P \ = \ \Genn{ \, a+b+c \in P^+ \; \Big| \; a+b+c\in N_P \, }.
 \]
\end{df}

\begin{ex}\label{ex: bands}
 Every ring $R$ is a band, with nullset $N_R=\{\sum a_i\mid \sum a_i = 0 \text{ in }R\}$. In fact, this defines a fully faithful embedding $\Rings\to\Bands$. Every field is a pasture.
 
 The following examples of interest are bands which are not rings (we write $a-b$ for $a+(-b)$):
 \begin{itemize}
  \item The \emph{regular partial field} is the pasture $\Funpm=\{0,1,-1\}$ with nullset
  \[
   N_\Funpm \ = \ \Big\{n.1+n.(-1) \, \Big| \, n\geq0 \Big\} \ = \ \genn{1-1}.
  \]

  \item The \emph{Krasner hyperfield} is the pasture $\K=\{0,1\}$ with nullset 
  \[
   N_\K \ = \ \N-\{1\} \ = \ \genn{1+1,\ 1+1+1}.
  \]
  \item The \emph{tropical hyperfield} is the pasture $\T=\R_{\geq0}$ with nullset
  \[\textstyle
   N_\T \ = \ \{0\} \ \cup \ \Big\{ \sum a_i \; \Big| \; a_1,\dotsc,a_n\text{ assumes its maximum at least twice}\Big\}.
  \]
 \end{itemize}
 Examples of band homomorphisms are the inclusion $\K\hookrightarrow \T$ and the surjection $\T\to\K$ that sends every nonzero element to $1$. A band homomorphism $R\to\T$ from a ring $R$ into $\T$ is the same thing as a non-archimedean seminorm. In particular, the trivial absolute value on a field $K$ is the unique band homomorphism $K\to\T$ that factors through $\K$.
\end{ex}

The pasture $\Funpm$ is an initial object in $\Bands$, i.e.,\ every band $B$ comes with a unique homomorphism $\Funpm\to B$. This leads to a description $B=\bandgen\Funpm{T_i\mid i\in I}{S}$ of $B$ in terms of generators $\{T_i\mid i\in I\}$ and relations $S\subset B^+$, in the sense that $\{T_i\}\cup\{0,-1\}$ generates $B$ as a monoid, $S$ generates the ideal $N_B$, and 
$S$ contains a complete set of binary relations between the signed products $x=\pm T_{i_1}\dotsb T_{i_r}$ of the $T_i$, i.e.,\ if $x-y\in S$ then $x=y$ as elements of $B$.

Similarly, we write $P=\pastgen\Funpm{T_i\mid i\in I}{S}$ for a pasture $P$ if $P^\times$ is generated as a group by $\{T_i\mid i\in I\}$ and $-1$, if $N_P=\genn{S}$, and if $S$ contains a complete set of binary relations between the signed products of the $T_i$. For example,
\[
 \K \ = \ \past\Funpm{\genn{1+1,\ 1+1+1}}, \quad \text{and} \quad \F_5 \ = \ \pastgen\Funpm{T}{T^2+1,\ T-1-1}.
\]

\section{Valuated matroids and subdivisions of the basis polytope} \label{Appendix:Matroid Polytope Subdivisions}

In this section, we show that a matroid is rigid if and only if it has a unique rescaling class over $\T$. We begin with some observations and recall some results from the literature.

For a pasture $F$, we can identify isomorphism classes of a (weak) Grassmann-Pl\"ucker function $\Delta$ with the corresponding {\em Pl\"ucker vector} $(\Delta(I))_{I \in \binom Er} \in {\mathbb P}^{\binom Er}(F)$.
We call this Pl\"ucker vector a {\em representation} of $M$, and by abuse of terminology we use the terms ``Grassmann-Pl\"ucker function'' and ``Pl\"ucker vector'' interchangeably.

Every matroid $M$ can be (uniquely) represented over $\K$ by the Grassmann-Pl\"ucker function $\Delta_M:\binom Er\to\K$ which sends an $r$-subset $I$ of $E$ to $1$ if it a basis of $M$ and to $0$ otherwise. 
Post-composing $\Delta_M$ with the inclusion $\K\hookrightarrow \T$ defines the \emph{trivial representation of $M$}, which shows that $M$ has at least one rescaling class over $\T$.

Recall that the {\em basis polytope} $P_M$ of $M$ is the convex hull of the points $e_I=\sum_{i\in I} e_i\in \R^n$ for which $I$ is a basis of $M$. Let $\Delta:\binom Er\to\T$ be a Pl\"ucker vector for $M$, i.e.,\ $\supp(\Delta)=\supp(\Delta_M)$. 

Let $\cS_\Delta=\{e_I\mid \Delta(I)\neq0\}$ be the support of $\Delta$, considered as a subset of $\R^n$. Post-composing with $\log$ yields a function $\tilde\Delta:\cS_\Delta\to\R$ whose graph $\Gamma$ is a subset of $\R^n\times\R$. The convex closure of $\Gamma$ has a unique coarsest structure as a polyhedral complex. The lower faces of this polyhedral complex are those faces for which the last coordinate of the outward 
normal vector is negative. Omitting this last coordinate projects these faces onto $P_M$ and defines a polyhedral subdivision of $P_M$ called the {\em regular subdivision associated to $\Delta$}, see e.g. \cite[Definition 2.3.8]{Maclagan-Sturmfels15}.

By a theorem of Speyer (cf.~\cite[Prop.~2.2]{Speyer08}), this subdivision of $P_M$ is a {\em matroid subdivision}, i.e., all faces of the subdivision are themselves matroid polytopes, and conversely every regular matroid subdivision of $P_M$ comes from a $\T$-representation of $M$ (see also \cite[Lemma 4.4.6]{Maclagan-Sturmfels15} and \cite[Thm.~10.35]{Joswig21}).

\begin{prop}\label{prop: rigid matroids}
 A matroid $M$ is rigid if and only if $M$ has a unique rescaling class over $\T$.
\end{prop}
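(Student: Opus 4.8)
The plan is to reduce both directions of the equivalence to a statement about the height function $\tilde\Delta$ and the regular subdivision it induces, using Speyer's theorem (recalled above) as the dictionary between $\T$-representations of $M$ and regular matroid subdivisions of $P_M$. Throughout, write $\cS=\{e_I\mid I\text{ a basis of }M\}$; a $\T$-representation of $M$ is a Pl\"ucker vector $\Delta\colon\binom Er\to\T$ with $\supp(\Delta)=\supp(\Delta_M)$, so that $\tilde\Delta=\log\circ\,\Delta$ is a genuine function $\cS\to\R$, and the trivial representation corresponds to $\tilde\Delta\equiv 0$. Note also that $\cS$ is exactly the vertex set of $P_M$, so every vertex of $P_M$ survives as a vertex of any matroid subdivision.

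First I would record how rescaling equivalence acts on $\tilde\Delta$: rescaling $\Delta$ by $(t_i)_{i\in E}\in(\R_{>0})^E$ replaces $\Delta(I)$ by $(\prod_{i\in I}t_i)\,\Delta(I)$, hence replaces $\tilde\Delta$ by $\tilde\Delta+\ell|_{\cS}$, where $\ell$ is the linear functional $x\mapsto\sum_i(\log t_i)x_i$ on $\R^n$. Consequently, $\Delta$ is rescaling equivalent to the trivial representation if and only if $\tilde\Delta$ is the restriction to $\cS$ of a linear functional on $\R^n$. Since every basis has size $r$, the functional $x\mapsto\sum_i x_i$ is constant on the affine span of $P_M$, so on $\cS$ the restrictions of linear functionals on $\R^n$ coincide with the restrictions of affine functions on the affine span of $P_M$.

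Next I would invoke the standard properties of regular subdivisions (see \cite{Maclagan-Sturmfels15}): the regular subdivision of $P_M$ attached to a height function $h\colon\cS\to\R$ depends only on $h$ modulo restrictions of affine functions, and it equals the trivial subdivision $\{P_M\}$ precisely when the lifted points $(e_I,h(e_I))$ are affinely coplanar, i.e.\ when $h$ is itself the restriction of an affine function. Combined with the previous paragraph, this yields the crucial statement: the regular subdivision associated to $\Delta$ is trivial if and only if $\Delta$ is rescaling equivalent to the trivial representation. The proposition now assembles from Speyer's theorem in both directions. If $M$ is rigid, then for any $\T$-representation $\Delta$ the associated regular subdivision is a matroid subdivision, hence trivial, hence $\Delta$ lies in the trivial rescaling class; since $M$ always admits the trivial representation, it has a unique rescaling class over $\T$. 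Conversely, if that rescaling class is unique, then any regular matroid subdivision of $P_M$ is induced by some $\T$-representation $\Delta$ (converse part of Speyer's theorem), which is rescaling trivial by hypothesis, so the subdivision is trivial; thus $M$ is rigid.

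The only point requiring care is the middle step: one must be precise that ``trivial subdivision'' means the lower convex hull of the lifted point set consists of a single facet, that this is equivalent to affine coplanarity of the lifts (here one uses that each $e_I$ remains a vertex, so no lift can lie strictly above the lower hull), and — because $\tilde\Delta$ is only well defined up to an additive constant and one is working inside the hyperplane $\sum_i x_i=r$ — that ``coplanar'' matches ``restriction of a linear functional on $\R^n$'' rather than ``affine function on all of $\R^n$''. None of these steps is deep, but conflating the various affine-linear conditions is the natural pitfall to avoid.
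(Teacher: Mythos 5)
Your proposal is correct and follows essentially the same route as the paper: both arguments hinge on the observation that, because every $e_I$ is a vertex of $P_M$, the regular subdivision induced by $\tilde\Delta$ is trivial if and only if the lifted points $\{(e_I,\tilde\Delta(I))\}$ lie in a single affine hyperplane, and that this happens precisely when $\Delta$ is rescaling equivalent to the trivial $\T$-representation (with Speyer's theorem supplying the correspondence between regular matroid subdivisions and $\T$-representations). The only cosmetic difference is that you phrase the rescaling step as adding restrictions of linear functionals (and carefully match ``linear on $\R^n$'' with ``affine on the hyperplane $\sum x_i=r$''), whereas the paper constructs an explicit rescaling vector $t=(\exp(-x_i))_i$ directly from the hyperplane.
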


\begin{proof}
 Let $r$ be the rank and $E=\{1,\dotsc,n\}$ the ground set of $M$. Let $\Delta:\binom Er\to\T$ be a tropical Pl\"ucker vector for $M$, and let $\cS_\Delta$ be as above. 
 
 By definition, $M$ is rigid if and only if $P_M$ admits only the trivial regular matroid subdivision. Since none of the points of $\cS_\Delta$ lies in the convex closure of the other points, $\Delta:\binom Er\to\T$ induces the trivial matroid subdivision if and only if the subset $\Big\{(e_I,\tilde\Delta(I))\mid I\in\cS_\Delta\Big\}$ of $\R^n\times\R$ is contained in an affine hyperplane $H$. 
 
 In this case, let $x_ie_i$ be the unique intersection point of $H$ with the coordinate axis generated by $e_i$ (in the case of a loop $i$ of $M$ there is no such intersection point, and we can formally put $x_i=+\infty$). Then $\tilde\Delta(I)=\sum_{k=1}^r x_{i_k}e_{i_k}$ for $I\in\cS_\Delta$. Rescaling $\Delta$ by $t=(\exp(-x_i)\mid i=1,\dotsc,n)$ yields a Pl\"ucker vector $\Delta_0=t.\Delta:\binom Er\to\T$ for which 
 \[
  \tilde\Delta_0(I) \ = \ \tilde\Delta(I)-\sum_{k=1}^r x_{i_k}e_{i_k} \ = \ 0
 \]
 for every $I\in\cS_\Delta$. Thus $\tilde\Delta_0$ is the trivial representation of $M$.
 Conversely, rescaling $\Delta_0$ yields a Pl\"ucker vector $\Delta$ for which $\Big\{(e_I,\tilde\Delta(I))\mid I\in\cS_\Delta\Big\}$ is contained in an affine hyperplane, which concludes the proof.
\end{proof}

\begin{rem} \label{rem:LocalDressian}
The (local) {\em Dressian} of a matroid $M$ (cf.~\cite{OPS19}) is a polyhedral complex $\Delta_M$ whose underlying set consists of all $\T$-representations of $M$; the polyhedral structure is defined by the 3-term tropical Pl\"ucker relations.
One can show using \cite[Cor. 18]{OPS19} that the lineality space of $\Delta_M$ is precisely the set of valuations on $M$ which are projectively equivalent to the trivial valuation.
The topological space ${\rm Hom}(F_M,{\mathbb T})$ considered in the body of this paper can then be naturally identified with $\Delta_M$ modulo its lineality space, which we call the {\em reduced Dressian} $\overline{\Delta}_M$.
(We omit the details, as it would take us too far afield into a somewhat lengthy discussion of various topologies and polyhedral structures.) 
See \cite[Section 3]{Brandt-Speyer22} for an algorithm for computing the Dressian and/or reduced Dressian of a matroid $M$, and also (in Section 5) some interesting counterexamples to plausible-sounding assertions.
\end{rem}


\begin{small}
 \bibliographystyle{plain}
 \bibliography{matroid}
\end{small}

\end{document}